\newcommand{\labbel}{\label}
\newtheorem{theorem}{Theorem}[section]
\newtheorem{lemma}[theorem]{Lemma}
\newtheorem{proposition}[theorem]{Proposition} 
\newtheorem{corollary}[theorem]{Corollary}
\newtheorem*{theorem*}{Theorem}
\newtheorem*{corollary*}{Corollary}
\newtheorem*{proposition*}{Proposition}
\theoremstyle{definition}
\newtheorem*{definition*}{Definition}
\theoremstyle{remark}
\newtheorem{remark}[theorem]{Remark}
\newtheorem*{disclaimer*}{Disclaimer}
\DeclareMathOperator{\cf}{cf}
\newcommand{\m}{\mathfrak}
\begin{document}
 
\title{A note on local properties in products} 

\author{Paolo Lipparini} 
\address{Locale del Dipartimento di Matematica\\Viale della Ricerca Scientifica\\II Universit\`a di Tor Vergata di Roma\\I-00133 ROME ITALY}
\urladdr{http://www.mat.uniroma2.it/\textasciitilde lipparin}

\keywords{Local, basic, product, topological space, open map} 

\subjclass[2010]{Primary 54D20;  Secondary 54B10}

\thanks{Work performed under the auspices of G.N.S.A.G.A}

\begin{abstract}
We give conditions
under which a product of topological spaces 
satisfies some local property.
The conditions are necessary and sufficient
when  the corresponding 
global property is preserved under finite products.
Further examples include local sequential compactness, 
local Lindel\"ofness, the local Menger property.
\end{abstract} 
 
\maketitle

\section{Introduction} \labbel{intro} 

Conditions under which a product of topological spaces 
satisfies some local property have long been 
known in many particular instances.
Results with a general flavor appeared in 
 Preu\ss\ \cite[Section 5.3]{P}
and in
Hoffmann \cite[Theorem 2.2 and   Remark 2.4(b)]{H}.
Notice that the terminology used by the above authors
sometimes differs from the one we shall use.
The results from
\cite{P, H} have been improved, together with significant 
examples and applications, in 
Brandhorst \cite{Br} and Brandhorst and  Ern\'e \cite{BE}.
We refer to \cite{BE} for historical remarks
and examples; in particular, 
about how the definitions and  the results generalize classical cases in special situations.
Here we give a complete characterization
of those spaces which are local relative to 
some class closed under finite products.
We also deal with some classes which are not even closed
under finite product. 

Let $\mathcal T$ 
 be a class of topological spaces.
Members of $\mathcal T$ will be called $\mathcal T$-spaces.
For each class $\mathcal T$, three  local notions are defined; see \cite{BE,H,P}
and Hoshina \cite{Henc}. 
A topological space $X$ is a \emph{local $\mathcal T$-space}
(resp., a \emph{basic $\mathcal T$-space}) if,
for every point $x \in X$ and every neighborhood 
 $U$ of $x$,
 there is a neighborhood (resp., an open neighborhood) $V$ of $x$ such that 
$V \subseteq U$ and $V \in \mathcal T$. 
We sometimes say that $X$ is $\mathcal T$-local,
instead of saying that $X$ is a local $\mathcal T$-space,
and similarly for $\mathcal T$-basic.
Under the Axiom of Choice (AC) 
a space is $\mathcal T$-local 
if and only if  every point of $x$ has a neighborhood base consisting
of $\mathcal T$-subspaces.
Here a \emph{$\mathcal T$-subspace}
is a subspace which belongs to $\mathcal T$;
similarly, a \emph{$\mathcal T$-neighborhood}
of some point is a neighborhood of
that point which belongs to $\mathcal T$.   
Again using AC,
a space is $\mathcal T$-basic if and only if 
it  has  an open base consisting 
of $\mathcal T$-subspaces.
 However, we shall try to avoid the use of AC
as much as possible; see Remark \ref{ac}.
As a rougher notion, 
a \emph{local$_1$ $\mathcal T$-space}
is a space such that each point has 
at least one neighborhood which is a $\mathcal T$-space.
In many cases, especially assuming some separation axiom,
localness and local$_1$ness coincide,
and sometimes all the three above local notions coincide,
but sometimes not. See Lemma \ref{triv}(d), where another local property 
frequently equivalent to $\mathcal T$-localness shall
be mentioned.  

Characterizations of basic and local $\mathcal T$-spaces
appear in \cite{H,P}, in the case when $\mathcal T$ is
closed with respect both to products and to 
images of 
surjective continuous functions.
A characterization under weaker conditions
appears in \cite[Theorem 2.4]{BE}.
In the quoted theorem
 $\mathcal T$ 
 has to be closed under finite products,
and a further condition has to be satisfied.
We prove here a more general statement  which applies to
\emph{every} class $\mathcal T$ which is closed under finite products.
The proof is perhaps  simpler.
Then a characterization is given for certain $\mathcal T$ which are not even closed under finite products.
 Examples include 
local sequential compactness and local Lindel\"ofness.
Moreover, we show that
the assumption that $\mathcal T$ is closed 
with respect to 
images of 
surjective continuous functions
can be considerably weakened.
We reformulate many results in such a way that,
seemingly, the Axiom of Choice is not needed.
No separation axiom is used, either,
unless  explicitly stated otherwise.
All products under consideration are endowed with the Tychonoff 
topology, the coarsest topology making all the projections continue.
Most results would change dramatically,
when considering the box topology, or intermediate topologies.

The next lemma is trivial, we shall use it
(especially clause (f))
 in many examples, generally  without 
explicit mention. $T_3$ means \emph{regular and Hausdorff},
while we do not assume that \emph{regular} implies Hausdorff.

\begin{lemma} \labbel{triv}
Let $\mathcal T$ be a class of topological spaces.
  \begin{enumerate}[(a)]
 \item  
$\mathcal T$-basic implies $\mathcal T$-local
and $\mathcal T$-local implies $\mathcal T$-local$_1$.
\item
A $\mathcal T$-space is  $\mathcal T$-local$_1$.
Hence,
for every class $\mathcal T$,
 $\mathcal T$-local and  $\mathcal T$-local$_1$ are equivalent
if and only if every $\mathcal T$-space is  $\mathcal T$-local.
\item
If $\mathcal T$ is open-hereditary,
then all the three local properties coincide,
in particular any 
$\mathcal T$-space is $\mathcal T$-basic and  $\mathcal T$-local.
\item 
If $\mathcal T$ is closed-hereditary, then,
in a regular topological space, $\mathcal T$-localness
and $\mathcal T$-local$_1$ness are equivalent,
 and are also equivalent to the following.
  \begin{enumerate}[(1)]    
\item[{\rm (L)}] 
{\rm For every point $x$ and every neighborhood $U$ of $x$,
there is an open neighborhood $V$ of $x$ such that 
$ \overline{V} \subseteq U $ and $ \overline{V} \in \mathcal T$.}   
  \end{enumerate} 
\item
In particular, if $\mathcal T$ is closed-hereditary, then
a regular $\mathcal T$-space is $\mathcal T$-local.
\item 
In both (d) and (e) above, the assumption that $\mathcal T$ is
closed-hereditary  can be weakened 
to $\mathcal T$ being hereditary with respect to 
regular closed subsets.
 \end{enumerate}
 \end{lemma}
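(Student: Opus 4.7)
The six clauses unfold from the definitions once the right neighbourhood is isolated, and I would dispatch them in order, each reusing the preceding ones.

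For (a), an open neighbourhood is a neighbourhood, which gives basic $\Rightarrow$ local; applying localness with $U = X$ produces a $\mathcal T$-neighbourhood, giving local $\Rightarrow$ local$_1$. For (b), $X$ itself is a neighbourhood of each of its points, so any $\mathcal T$-space is $\mathcal T$-local$_1$; this yields one direction of the ``hence'' equivalence. For the other direction, assume every $\mathcal T$-space is $\mathcal T$-local and let $X$ be $\mathcal T$-local$_1$: given $x$ and a neighbourhood $U$ of $x$, pick a $\mathcal T$-neighbourhood $W$ of $x$, apply $\mathcal T$-localness in $W$ to $U \cap W$ to obtain $V \in \mathcal T$ that is a neighbourhood of $x$ in $W$ with $V \subseteq U$, and note that $V$ is then also a neighbourhood of $x$ in $X$ because $W$ is.

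For (c), starting from a neighbourhood $U$ of $x$ and a $\mathcal T$-neighbourhood $W$ of $x$ (supplied by local$_1$ness), I would take any open $V$ of $X$ with $x \in V \subseteq U \cap W^{\circ}$; then $V$ is open in the subspace $W$, hence $V \in \mathcal T$ by open-heredity. This gives local$_1$ $\Rightarrow$ basic, so by (a) all three notions coincide; taking $W = X$ also shows that any $\mathcal T$-space is $\mathcal T$-basic.

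For (d)--(f) the substantive step is $\mathcal T$-local$_1$ness $\Rightarrow$ (L) under regularity. Given a $\mathcal T$-neighbourhood $W$ of $x$ and a neighbourhood $U$ of $x$, regularity supplies an open $V$ with $x \in V \subseteq \overline V \subseteq U \cap W^{\circ}$. Since $\overline V \subseteq W$, the closure of $V$ in $W$ equals $\overline V$, and since $V$ is open in $W$, the interior of $\overline V$ in $W$ contains $V$, whose $W$-closure is again $\overline V$; thus $\overline V$ is regular closed in the subspace $W$. Closed-heredity (or its weakening in (f)) now places $\overline V$ in $\mathcal T$, yielding (L). The loop closes because (L) trivially implies $\mathcal T$-localness and $\mathcal T$-local implies $\mathcal T$-local$_1$ by (a); clause (e) then follows from (b) together with (d). The one delicate point, and therefore the main obstacle I anticipate, is the verification that $\overline V$ is regular closed \emph{as a subspace of $W$}, not merely in $X$; once that is recorded, (f) costs nothing beyond weakening the hypothesis.
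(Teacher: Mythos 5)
The paper states this lemma without proof, declaring it trivial, so there is nothing to compare against; your argument is correct and is exactly the intended routine verification. In particular, the two points that actually require a moment's care --- the transitivity of ``neighbourhood of a neighbourhood'' in (b), and the check that $\overline{V}$ is regular closed \emph{in the subspace} $W$ for (d) and (f) --- are both handled correctly.
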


\section{A weak assumption} \labbel{prel} 

We shall use almost everywhere the following assumption (W).
  \begin{enumerate}
  \item[(W)]
$\mathcal T$ is a class of topological spaces which satisfies
the following properties.
  \begin{enumerate}
  \item[(W1)] $\mathcal T$ is closed under homeomorphic images.
\item[(W2)] Whenever $A$, $B$ are arbitrary topological spaces, $a \in A$,
$b \in B$ and   $ T \subseteq A \times B$ is a $\mathcal T$-neighborhood 
of $(a, b)$,
  then there is $S \subseteq A$  which is a $\mathcal T$-neighborhood  of  $a$. 
\item[(W3)]  Whenever $A$, $B$ are nonempty topological spaces, 
$B'$ is a nonempty open subset of $B$, 
 $ A \times B'  \subseteq T \subseteq A \times B$ and $T \in \mathcal T$,
  then $A \in \mathcal T$. 
   \end{enumerate} 
   \end{enumerate} 

Notice that, in particular, (W3)
implies the following weaker property.

  \begin{enumerate}[(WWW)] 
   \item [(W3$'$)] 
If $A \times B \in \mathcal T$,
for some nonempty $A$, $B$,
then $A \in \mathcal T$.
  \end{enumerate} 

In particular (provided that $\mathcal T$ contains at least
one  nonempty space),
 (W1) and (W3$'$) imply that every one-element space is a
$\mathcal T$-space.
If $\mathcal T$ is closed under images of continuous 
surjections,
then (W) is verified. Indeed, under that assumption, 
(W1) is trivial and, as far as (W2) and (W3)
are concerned,
it is enough to consider $ \pi_1(T)$,
where $\pi_1$ is the canonical projection onto the first factor.
In another direction, (W) is verified also in case $\mathcal T$ is
 hereditary and closed under homeomorphic images.
In this case it is enough
to consider $T \cap (A \times \{ b \}) $,
where, to get (W3), we pick any $b \in B'$.
If we are working in the context of $T_1$ spaces
(i.e., all spaces in (W2) and (W3) are assumed to be $T_1$)
then it is enough to assume that $\mathcal T$ is
closed-hereditary and closed under  homeomorphic images.
In particular, in the context of $T_1$ spaces,
the class $\mathcal T$ of all normal spaces satisfies (W).
Hence Property (W) seems to be definitely very\/ {\emph{W}}\/eak. 
A few more conditions implying (W) shall be discussed near the end of this note.

If $\prod _{i \in I} X_i $
is a product of topological spaces and 
$ J \subseteq I$, then $\prod _{i \in J} X_i $ 
is called a \emph{subproduct} of $\prod _{i \in I} X_i $.
If $I \setminus J$ is finite, then
(with a slight abuse of terminology) we shall call
$\prod _{i \in J} X_i $  a 
\emph{cofinite subproduct} of $\prod _{i \in I} X_i $.

\begin{lemma} \labbel{ln}
Suppose that $X$ is a nonempty
product of topological spaces and $\mathcal T$ is a class of topological spaces 
closed under homeomorphic images.
  \begin{enumerate}
\item[(a)]
If $\mathcal T$ satisfies 
{\rm (W2)}  and $X$ is $\mathcal T$-local,
then all factors and all subproducts are $\mathcal T$-local.
\item[(b)]
If $\mathcal T$ satisfies 
{\rm (W3)},  $X$ contains a set $T \in \mathcal T$
and $T$ contains some nonempty open set, then some cofinite subproduct 
belongs to $\mathcal T$. 
\item[(c)]
If $\mathcal T$ is closed under finite products,
$\mathcal T$  satisfies {\rm (W3$'$)},
$X \in \mathcal T$ and
 all factors of $X$ 
are $\mathcal T$-local, then $X$ is $\mathcal T$-local.
\item[(d)]
If $\mathcal T$ is closed under finite products,
then the product of two local $\mathcal T$-spaces
is still $\mathcal T$-local.
  \end{enumerate}
 \end{lemma}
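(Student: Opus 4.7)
I will treat the four clauses in order, since (a) and (c) carry essentially all the content. For (a), fix a factor $X_{i_0}$, a point $x_{i_0} \in X_{i_0}$, and a neighborhood $U$ of $x_{i_0}$; replacing $U$ by its interior I may assume $U$ is open. Since $X$ is nonempty I can extend $x_{i_0}$ to some $x \in X$, and then $U \times \prod_{i \neq i_0} X_i$ is an open neighborhood of $x$, so $\mathcal T$-localness of $X$ supplies a $\mathcal T$-neighborhood $T$ of $x$ contained in it. The crucial move is to re-read $X$ as $A \times B$ with $A = U$ regarded as an \emph{open} subspace of $X_{i_0}$ and $B = \prod_{i \neq i_0} X_i$; then $T$ is a $\mathcal T$-neighborhood of $(x_{i_0},b)$ in $A \times B$, and (W2) produces a $\mathcal T$-neighborhood $S$ of $x_{i_0}$ inside $A$. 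Because $A$ is open in $X_{i_0}$, $S$ is automatically a $\mathcal T$-neighborhood in $X_{i_0}$ as well, and $S \subseteq A \subseteq U$. The case of a general subproduct $\prod_{i \in J} X_i$ goes through identically after writing $X$ as $\prod_{i \in J} X_i \times \prod_{i \in I \setminus J} X_i$.

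For (b), pick any nonempty open set sitting inside $T$; it contains a basic open rectangle $\prod_{i \in F} V_i \times \prod_{i \in I \setminus F} X_i \subseteq T$ with $F$ finite and each $V_i$ nonempty open. Setting $A = \prod_{i \in I \setminus F} X_i$, $B = \prod_{i \in F} X_i$, and $B' = \prod_{i \in F} V_i$, one has $A \times B' \subseteq T \subseteq A \times B$ with $A, B$ nonempty and $B'$ nonempty open in $B$, so (W3) applied to $T$ gives $A \in \mathcal T$.

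For (c), the main part, fix $x \in X$ and a neighborhood $U$ of $x$, and pick a basic open neighborhood $\prod_i V_i \subseteq U$ of $x$ with $V_i = X_i$ for $i$ outside some finite set $F$. For each $i \in F$, $\mathcal T$-localness of $X_i$ provides a $\mathcal T$-neighborhood $S_i \subseteq V_i$ of $x_i$. This is where the assumption $X \in \mathcal T$ enters: splitting $X$ as $\prod_{i \in F} X_i \times \prod_{i \in I \setminus F} X_i$ (both factors nonempty, assuming $X$ itself is nonempty) and applying (W3$'$) yields $\prod_{i \in I \setminus F} X_i \in \mathcal T$. Closure of $\mathcal T$ under finite products then gives $\prod_{i \in F} S_i \times \prod_{i \in I \setminus F} X_i \in \mathcal T$; it contains an open neighborhood of $x$ (formed from the interiors of the $S_i$ for $i \in F$) and is contained in $\prod_i V_i \subseteq U$, which is what is required. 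Finally, (d) follows by the same pattern for two factors, with no need for (W3$'$): given $(x_1,x_2)$ and a neighborhood, pick $V_1 \times V_2 \subseteq U$, extract $\mathcal T$-neighborhoods $S_i \subseteq V_i$ from local $\mathcal T$-ness, and observe that $S_1 \times S_2 \in \mathcal T$ by closure under finite products and is itself a neighborhood of $(x_1,x_2)$ contained in $U$.

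The only delicate step I anticipate is the one in (a): ensuring that the set $S$ returned by (W2) actually ends up inside $U$. This is precisely what forces me to take $A = U$ as an open subspace of $X_{i_0}$ (rather than the whole $X_{i_0}$), after first replacing $U$ by its interior so that this reinterpretation makes sense.
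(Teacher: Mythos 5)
Your proof is correct and follows essentially the same route as the paper's: each clause uses the same decomposition of $X$ into the relevant subproduct times its complement, with (W3)/(W3$'$) applied to a basic open rectangle in (b) and (c), and the two-factor specialization in (d). The ``delicate step'' you flag in (a) is handled in the paper by exactly your device, namely applying (W2) with the first coordinate space taken to be the neighborhood itself (the paper uses the neighborhood $A$ as a subspace rather than its interior, but the effect --- forcing the output $S$ to land inside $U$ --- is the same).
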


 \begin{proof}
(a) Let $X=\prod _{i \in I} X_i $ be nonempty and $\mathcal T$-local
and let $ \emptyset \not=J \subseteq I$. 
Since $\prod _{i \in I} X_i $ is nonempty,
then also $Y=\prod _{i \in J} X_i $ is nonempty.
We have to show that
$Y $ is $\mathcal T$-local
(we allow $|J|= 1$, so this case takes into account factors).
Let $H= I \setminus J$. If $H= \emptyset $, there is nothing to prove,
hence we can suppose that   $H\not= \emptyset $.
Let $B= \prod _{i \in H} X_i $. Notice that $X$
is homeomorphic to $Y \times B$, hence  $Y \times B$
is $\mathcal T$-local, since $\mathcal T$ is closed
under homeomorphisms. 

Let $a  \in Y$ and suppose that
$A$  is a neighborhood of $a$ in $Y $.
 Since  $X=\prod _{i \in I} X_i $ is nonempty,
then
also $B= \prod _{i \in H} X_i $  is nonempty;
pick $b \in B$.
Now $A \times B$ is 
a neighborhood of $(a,b)$ 
in $Y \times B$, which is $\mathcal T$-local,
hence there is $T \subseteq A \times B$
which is a $\mathcal T$-neighborhood   
of $(a,b)$.
By (W2), there is  $ S \subseteq A$
which is a $\mathcal T$-neighborhood of $a$.
The above argument works
for every $a \in Y $  
and every neighborhood $A$ of $a$, hence
we get that $Y$  is $\mathcal T$-local.

(b) Let
$X=\prod _{i \in I} X_i $, hence $T$ contains a basic nonempty open 
set of the form
$\prod _{i \in I} Y_i $,
where  each $Y_i$ is open in $X_i$,
and $Y_i =X_i$, for every $i \in J = I \setminus F$,
with $F$ finite. If $F= \emptyset $ then $T=X$
and we are done, so suppose that  
 $F \not= \emptyset $.
Take $A= \prod _{i \in J} X_i  $
and 
$B= \prod _{i \in F} X_i  $. 
Since $\mathcal T$ is closed under homeomorphisms,
we lose no generality if we
identify $X$ with
$A \times B$. 
Taking $B' = \prod _{i \in F} Y_i $,
we have that 
 $ A \times B'  \subseteq T \subseteq A \times B$, hence we
 can apply (W3) to get that the cofinite subproduct 
$A$ belongs to $\mathcal T$.

(c) 
If $x = ( x_i) _{i \in I}  \in X$ and $U$ is a neighborhood of $x$,
then $U$ contains a basic 
open set of the form
$\prod _{i \in I} U_i $,
where 
$x_i \in U_i$ for every $i \in I$, and
$U_i=X_i$,
for all indices except perhaps for indices 
in a finite set $F$.
By (W3$'$) and closure under homeomorphisms,
$C=\prod _{i \in I \setminus F } X_i \in \mathcal T$.
Since each factor is $\mathcal T$-local,
then, for every $i \in F $,
$x_i$ has  a $\mathcal T$-neighborhood $V_i \subseteq U_i$.  
Let $V_i = X_i$ for  $i \not\in F $.
Then $\prod _{i \in I} V_i $ is a neighborhood 
of $x$ contained in $U$.
Since $\prod _{i \in I} V_i $ is homeomorphic to the finite product
$C \times \prod _{i \in F} V_i  $ and since
$\mathcal T$ is closed under finite products and homeomorphisms,
then  $\prod _{i \in I} V_i  \in \mathcal T$, hence 
 $\prod _{i \in I} V_i  $  
is a neighborhood of $x$ as requested. 

(d) is 
similar and easier.
 \end{proof}    

Notice that (a) in Lemma \ref{ln}
holds also in case we give to $\prod _{i \in I} X_i $  
the box topology, but this is not necessarily the case for (b) and (c).

\section{Properties closed under products} \labbel{clpr} 

\begin{theorem} \labbel{bebis}
Suppose that $X$ is a nonempty product
and $\mathcal T$ is a class 
of topological spaces closed under finite products 
and satisfying {\rm (W)}.
Then the following conditions are equivalent
(conditions marked with an asterisk are equivalent under the further 
assumption that every $\mathcal T$-space is $\mathcal T$-local).
 \begin{enumerate}
   \item 
$X$ is $\mathcal T$-local.
\item
 Each factor is $\mathcal T$-local and
some cofinite subproduct is a $\mathcal T$-space.
\item[(3)*]
Some cofinite subproduct is a $\mathcal T$-space
and each of the remaining factors are $\mathcal T$-local.
  \end{enumerate} 

If, in addition, $\mathcal T$ is closed under arbitrary products, then the preceding conditions are
also equivalent to the following ones.
  \begin{enumerate}
    \item[(4)]
Every countable subproduct is $\mathcal T$-local.
\item[(5)]
Each factor is $\mathcal T$-local and
all but a finite number of factors are $\mathcal T$-spaces.
\item[(6)*]
All but a finite number of factors are $\mathcal T$-spaces
and the remaining factors are $\mathcal T$-local.
  \end{enumerate}
\end{theorem}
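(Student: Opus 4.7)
The plan is to prove the equivalences via the cycles $(1) \Rightarrow (2) \Rightarrow (1)$ together with $(2) \Leftrightarrow (3)^*$ for the first block, and then, under the extra hypothesis that $\mathcal T$ is closed under arbitrary products, the cycle $(1) \Rightarrow (4) \Rightarrow (5) \Rightarrow (2)$ together with $(5) \Leftrightarrow (6)^*$. Lemma~\ref{ln} is tailored precisely for the technical heart of the argument, namely $(1) \Rightarrow (2)$ and $(2) \Rightarrow (1)$; the other arrows should fall out from (W3$'$) (which, combined with closure under homeomorphisms, extracts individual factors of a $\mathcal T$-product) and from closure under finite or arbitrary products.

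For $(1) \Rightarrow (2)$, Lemma~\ref{ln}(a) immediately gives that every factor is $\mathcal T$-local, and for any $x \in X$ a $\mathcal T$-neighborhood of $x$ must contain a nonempty basic open set, so Lemma~\ref{ln}(b) delivers the required cofinite $\mathcal T$-subproduct. For $(2) \Rightarrow (1)$, write $C = \prod_{i \notin F} X_i \in \mathcal T$, take $x \in X$ and a neighborhood $U$, and shrink $U$ to a basic open set with exceptional finite set $F'$. Splitting $C$ as the product of $\prod_{i \notin F \cup F'} X_i$ and $\prod_{i \in F' \setminus F} X_i$, the property (W3$'$) places the larger cofinite subproduct $\prod_{i \notin F \cup F'} X_i$ in $\mathcal T$. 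Then, using $\mathcal T$-locality at each of the finitely many factors $X_i$ with $i \in F \cup F'$, I select $\mathcal T$-neighborhoods $V_i \subseteq U_i$ of $x_i$, set $V_i = X_i$ outside $F \cup F'$, and conclude by closure of $\mathcal T$ under finite products and homeomorphisms that $\prod_i V_i$ is the desired $\mathcal T$-neighborhood of $x$ contained in $U$.

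The equivalence $(2) \Leftrightarrow (3)^*$ is then almost formal: $(2) \Rightarrow (3)^*$ is trivial, and for the converse, (W3$'$) yields that every factor of a cofinite $\mathcal T$-subproduct is a $\mathcal T$-space, hence $\mathcal T$-local by the standing hypothesis. Moving to the second block, $(1) \Rightarrow (4)$ is a direct application of Lemma~\ref{ln}(a) restricted to countable index subsets. For $(5) \Rightarrow (2)$, closure under arbitrary products packages all the $\mathcal T$-space factors into a cofinite $\mathcal T$-subproduct. The equivalence $(5) \Leftrightarrow (6)^*$ is again formal, because the standing hypothesis promotes every $\mathcal T$-space factor to a $\mathcal T$-local one.

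The main obstacle is $(4) \Rightarrow (5)$, which I would establish by contradiction. Assuming infinitely many factors fail to be $\mathcal T$-spaces, enumerate a countable subfamily $X_{i_1}, X_{i_2}, \dots$ of such factors and apply hypothesis (4) to the countable subproduct $\prod_n X_{i_n}$, which must therefore be $\mathcal T$-local. The already established implication $(1) \Rightarrow (2)$ then supplies an $N$ with $\prod_{n > N} X_{i_n} \in \mathcal T$, and a further application of (W3$'$) extracts each $X_{i_n}$ ($n > N$) as a $\mathcal T$-space, contradicting the choice of the enumeration. This step is the most delicate because it chains together the first block of equivalences with repeated use of (W3$'$) at arbitrary partitions of the index set; once it is in place, the remaining implications amount to bookkeeping.
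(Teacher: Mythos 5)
Your proposal is correct and follows essentially the same route as the paper: the same cycle of implications, the same reliance on Lemma~\ref{ln}(a)(b) and on (W3$'$), and the same contradiction argument via a countable subfamily for $(4)\Rightarrow(5)$; the only differences are that you inline the content of Lemma~\ref{ln}(c)(d) for $(2)\Rightarrow(1)$ instead of citing it, and extract single factors directly by (W3$'$) for $(3)^*\Rightarrow(2)$ where the paper goes through $\mathcal T$-locality of the cofinite subproduct. (One trivial point to add: in $(4)\Rightarrow(5)$ you should also note that each factor is $\mathcal T$-local, which is immediate since a single factor is itself a countable subproduct.)
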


\begin{proof}
(1) $\Rightarrow $  (2) follows from Lemma \ref{ln}(a)(b).

 (2) $\Rightarrow $  (1) By  Lemma \ref{ln}(c), the cofinite subproduct given by (2)
is $\mathcal T$-local, and then $X$ is homeomorphic to a finite product of 
$\mathcal T$-local spaces, hence $\mathcal T$-local, by Lemma \ref{ln}(d). 

(2) $\Rightarrow $  (3) is trivial.

If (3) holds and every $\mathcal T$-space is $\mathcal T$-local,
then the cofinite subproduct given by (3) is $\mathcal T$-local, hence every factor
is $\mathcal T$-local, by Lemma \ref{ln}(a). Thus (3) $\Rightarrow $  (2).

(2) $\Rightarrow $  (5) follows by (W3$'$) and (W1);
 (5) $\Rightarrow $  (2) is immediate from the additional assumption.
Hence, under the additional assumption,
(1), (2) and (5) are equivalent.

(1) $\Rightarrow $  (4) follows from Lemma \ref{ln}(a).

If (4) holds, then, again by \ref{ln}(a),  all factors are $\mathcal T$-local.
Suppose by contradiction that (4) holds and (5) fails, thus there are infinitely many 
factors which are not $\mathcal T$-spaces. Choose a countable subfamily.
By (4), the subproduct of the members of such a family is $\mathcal T$-local.
Applying the already proved implication (1) $\Rightarrow $  (5)
to this countable subproduct, we get that all but finitely many members
of the subfamily are $\mathcal T$-spaces, a contradiction. 

The equivalence of (5) and (6) is immediate from the 
assumption that every $\mathcal T$-space is $\mathcal T$-local.
\end{proof}
 
Notice that the equivalence of (1) and (2) above improves
\cite[Theorem 2.4]{BE}.
This is because the assumptions in \cite[Theorem 2.4]{BE}
imply that $\mathcal T$ is closed under finite products, and, under 
the same assumptions, the last conclusion in \cite[Theorem 2.4]{BE} 
is equivalent to the product
having a cofinite subproduct in $\mathcal T$.

The versatility of Theorem \ref{bebis} and the broad 
range of validity 
of Property (W) are shown
by the 
samples presented in  the 
next two corollaries. In some cases the results are well-known.
Further examples can be found
in \cite{BE}; in some cases the results here are slightly more general. 
Following \cite{BE}, if $\kappa$ is an infinite cardinal,
we denote by $\mathcal T_ \kappa $  the class of all spaces which can be obtained as the union
of $< \kappa $ many $\mathcal T$-spaces.  Notice that 
if $\mathcal T$ is closed under finite products, 
then $\mathcal T_ \kappa $ is closed under finite products, too.

\begin{corollary} \labbel{ex1}
A nonempty product of topological spaces 
is locally Hausdorff 
if and only if all but finitely many factors are 
Hausdorff 
and all the remaining factors are locally 
Hausdorff.
The same holds when ``Hausdorff''
is replaced by any one of the following:
$T_3$, regular, Tychonoff.

If we work in the context of regular  spaces,
the same applies to compact, 
sequentially pseudocompact,
bounded, $\lambda$-bounded,
$D$-compact,
$D$-feebly compact (for some given ultrafilter $D$).
Here and below we can also consider 
the conjunction of any set of the above properties, in particular,
simultaneous $D$-compactness, for 
$D$ belonging to a given set of ultrafilters.

Without assuming separation axioms, a
 nonempty product of topological spaces 
is locally $D$-compact
if and only if all factors are locally $D$-compact and
 all but finitely many factors are 
$D$-compact.
The same applies when ``$D$-compact''
is replaced by  any of the above mentioned properties,
as well as by connected, path-connected, $H$-closed.

Relative to any of the above properties
a nonempty product is local if and only if
every countable subproduct is local.

If $\kappa$ is an infinite cardinal, 
a nonempty product is locally $\kappa$-sequentially compact
if and only if all factors are locally $\kappa$-sequentially compact and 
some cofinite subproduct is $\kappa$-sequentially compact.
The same applies when ``$\kappa$-sequentially compact'' is
replaced by  $\mathcal T_ \kappa $ (if $\mathcal T$ is
closed under finite products and $\mathcal T_ \kappa $ satisfies
(W)), or ``of cardinality $<\kappa$''.
\end{corollary}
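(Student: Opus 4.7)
The plan is to view every assertion of the corollary as a direct application of Theorem \ref{bebis}, with $\mathcal{T}$ taken to be the global version of the named local property. For each choice of $\mathcal{T}$, the work then reduces to verifying: (i) closure under finite products (and, when conditions (4)--(6) are invoked, under arbitrary products); (ii) property (W); and (iii) whenever the asterisked condition (3)* or (6)* is used, that every $\mathcal{T}$-space is already $\mathcal{T}$-local. Once these checks are in hand, the stated equivalences are immediate specializations of the theorem.

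For the first batch -- Hausdorff, $T_3$, regular, Tychonoff -- each class is hereditary and closed under arbitrary products; by the remark following (W) in \S\ref{prel}, hereditariness together with closure under homeomorphic images already forces (W), while Lemma \ref{triv}(c) provides that every such space is $\mathcal{T}$-local. Thus condition (6) of Theorem \ref{bebis} delivers the claim. For the regular-context batch -- compact, sequentially pseudocompact, bounded, $\lambda$-bounded, $D$-compact, $D$-feebly compact -- each property is closed-hereditary and closed under arbitrary products (Tychonoff's theorem for compactness, standard ultrafilter/sequential arguments for the rest). In the regular context, the closed-hereditary\,$+$\,homeomorphisms observation from \S\ref{prel} supplies (W), and Lemma \ref{triv}(e) supplies the starred premise. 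The conjunction clause then follows because closure under products, property (W), and the ``every $\mathcal{T}$-space is $\mathcal{T}$-local'' condition are each preserved under intersections of classes.

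For the no-separation batch -- connected, path-connected, $H$-closed, and $D$-compact -- each property is preserved by continuous surjective images, giving (W) via the corresponding remark, and each is closed under arbitrary products, so one applies the unstarred equivalence (1) $\Leftrightarrow$ (5) of Theorem \ref{bebis}; the countable-subproduct statement is precisely condition (4). For $\kappa$-sequentially compact and for the $\mathcal{T}_\kappa$ and ``cardinality $<\kappa$'' variants, arbitrary products generally fail to preserve the property, so only the weaker equivalence (1) $\Leftrightarrow$ (2) is available, matching the ``cofinite subproduct'' phrasing in the statement; closure of $\mathcal{T}_\kappa$ under finite products is recorded in the paragraph preceding the corollary, and property (W) for $\mathcal{T}_\kappa$ is an explicit hypothesis in that clause. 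The main obstacle I anticipate is the piecewise verification of (W), and particularly of (W3), for the less classical properties such as sequential pseudocompactness or $D$-feeble compactness; once those verifications are assembled, the corollary becomes a mechanical unpacking of Theorem \ref{bebis}.
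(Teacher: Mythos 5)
Your overall plan is exactly the paper's (the corollary is stated without an explicit proof precisely because it is meant to be a mechanical specialization of Theorem \ref{bebis}, using condition (6)* for the first two batches, the unstarred (5) and (4) for the third and fourth paragraphs, and (1) $\Leftrightarrow$ (2) for the $\kappa$-sequential case). However, two of your verification steps would fail as written. First, for the regular-context batch you assert that each of the properties is closed-hereditary and invoke Lemma \ref{triv}(e). This is false for $D$-feeble compactness (a closed subspace of a feebly compact space need not be feebly compact), and it is doubtful for sequential pseudocompactness and boundedness as well. The paper's discussion immediately after the corollary makes this the key point: these properties are only hereditary with respect to \emph{regular closed} sets, so the starred premise ``every $\mathcal T$-space is $\mathcal T$-local'' must come from Lemma \ref{triv}(f), not (e). Relatedly, your route to (W) for this batch via closed-hereditariness does not apply: the paper's remark that closed-hereditary $+$ homeomorphism-closed implies (W) is stated for the $T_1$ context (one needs $\{b\}$ closed to realize $T\cap(A\times\{b\})$ as a closed subspace), and regularity here does not include $T_1$. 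The correct and uniform route for all the compactness-like properties is that they are preserved under continuous surjections, i.e., they satisfy (C), which gives (W) with no separation axiom --- this is also what makes the third paragraph work.

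Second, your justification of the conjunction clause --- that (W) and ``every $\mathcal T$-space is $\mathcal T$-local'' are ``preserved under intersections of classes'' --- is not true in general. If $T\in\mathcal T_1\cap\mathcal T_2$ is a $\mathcal T_1\cap\mathcal T_2$-neighborhood of $(a,b)$, conditions (W2) for $\mathcal T_1$ and for $\mathcal T_2$ produce two possibly different neighborhoods $S_1\in\mathcal T_1$ and $S_2\in\mathcal T_2$ of $a$, not a single one in the intersection; the same defect affects localness of members of the intersection. The repair is the same as above: each of the listed properties satisfies (C), and (C) is genuinely preserved under intersections, hence so is (W); likewise regular-closed-hereditariness is preserved under intersections, so Lemma \ref{triv}(f) applies to the conjunction in the regular context. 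With these two substitutions ((C) in place of closed-hereditariness for (W), and \ref{triv}(f) in place of \ref{triv}(e)) your argument goes through and coincides with the intended proof.
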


Notice that, for example,  a Hausdorff compact
space is locally compact, but this is not necessarily true
without assuming the Hausdorff property.
Hence, in case we assume no separation axiom, we get only the weaker statements
in the third paragraph of Corollary \ref{ex1}.
In most cases
the Hausdorff property is not enough and regularity is needed.
As an example, if some space is $D$-feebly compact,
then the closure of every open set 
is $D$-feebly compact, that is,
$D$-feeble compactness is 
hereditary with respect to regular closed sets.
Hence, by Lemma \ref{triv}(f),  a regular $D$-feebly compact space is locally  
$D$-feebly compact, but, again, this is not necessarily the case,
without assuming some separation axiom.
Notice that in the context
of Tychonoff spaces, 
$D$-feebly compact spaces are usually called
\emph{$D$-pseudocompact}.

For certain properties, some slightly more refined 
results can be obtained. Local sequential compactness
shall be dealt with  in the next section.

\begin{corollary} \labbel{metr} 
(a) A nonempty product is locally 
metrizable if and only if 
all but countably many factors are one-element,
all but finitely many factors are metrizable
and the remaining factors are locally metrizable.
In particular, a nonempty product is locally 
metrizable if and only if each subproduct
by $ \leq \omega_1$ factors is locally metrizable.

(b) A nonempty product is locally finite if and only if 
all but a finite number of factors are one-element spaces
and the remaining factors are locally finite.
A nonempty product is locally 
finite if and only if each countable subproduct
is locally finite.

The same applies when ``finite'' is replaced by either  ``countable'', 
or
``of cardinality $< \kappa $'', if $ \omega \leq \kappa \leq 2^ \omega $
(of course, this adds nothing, if the Continuum Hypothesis holds).  
 \end{corollary}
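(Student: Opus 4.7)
The plan is to apply Theorem \ref{bebis} to the class $\mathcal{T}$ of metrizable spaces for (a) and to the class of finite (resp.\ countable, resp.\ of cardinality $<\kappa$) spaces for (b) and its variants. Each of these classes is hereditary and closed under homeomorphic images, so Property (W) holds by the observations in Section \ref{prel}; and each is closed under finite products (trivially for the cardinality classes, classically for metrizable spaces). Hence the equivalence (1) $\iff$ (2) of Theorem \ref{bebis} applies: the product is $\mathcal{T}$-local iff every factor is $\mathcal{T}$-local and some cofinite subproduct belongs to $\mathcal{T}$. My task then reduces to translating ``some cofinite subproduct lies in $\mathcal{T}$'' into the structural statement of the corollary, and to handling the subproduct-size assertions by a combinatorial argument.

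For (a) I would invoke the classical fact that a nonempty product $\prod_{i \in I} X_i$ is metrizable exactly when every factor is metrizable and all but countably many factors are one-element (first countability forces the cardinality bound on non-one-element factors, while a countable product of metrizable spaces carries the standard weighted-sum metric). Since a cofinite subproduct differs from the whole product only in finitely many coordinates, ``some cofinite subproduct is metrizable'' is equivalent to ``all but countably many factors are one-element and all but finitely many are metrizable'', and combining with ``every factor is locally metrizable'' gives exactly the three-part condition of the corollary. For the $\leq\omega_1$-subproduct statement, the forward direction is Lemma \ref{ln}(a). For the converse I would argue as follows: if uncountably many factors of the original product were non-one-element, then the subproduct by any $\omega_1$ of them would itself have uncountably many non-one-element factors and so would fail the structural characterization just proved for its own local metrizability, contradicting the hypothesis. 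Hence at most countably many factors are non-one-element, and the subproduct by these (of size $\leq\omega_1$) is locally metrizable by hypothesis; applying the first part of (a) to it gives the required metrizability and local-metrizability conditions, which pass up to the whole product because the remaining factors are one-element (hence metrizable).

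Part (b) and its variants follow the same template. The auxiliary fact is that a nonempty product has finite (resp.\ countable, resp.\ cardinality $<\kappa$) cardinality exactly when every factor has that property and all but finitely many factors are one-element; the only nontrivial direction uses that a countable subfamily of factors each with at least two points would force the product to have cardinality $\geq 2^\omega$, which exceeds $\kappa$ under the hypothesis $\kappa\leq 2^\omega$. Feeding this into (1) $\iff$ (2) of Theorem \ref{bebis} yields the structural characterization, and the countable-subproduct assertion follows by the analogue of the $\omega_1$-argument: if infinitely many non-one-element factors existed, picking countably many of them would furnish a countable subproduct whose local $\mathcal{T}$-ness (from the hypothesis) clashes with the already-proved first part of (b). The main obstacle I anticipate is not technical but bookkeeping: one has to keep distinct the three conditions ``one-element'', ``in $\mathcal{T}$'', and ``locally in $\mathcal{T}$'' and track carefully how they interact when one enlarges a subproduct to the whole product, especially in the mixed regime of (a) where the ``one-element'' quantifier is $\omega_1$ but the ``in $\mathcal{T}$'' quantifier is finite.
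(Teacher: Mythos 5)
Your proposal is correct and follows essentially the route the paper intends: Corollary \ref{metr} is presented as a direct application of Theorem \ref{bebis} (via the hereditariness of metrizability and of the cardinality classes, which gives (W)), combined with the classical characterizations of when a product is metrizable or of small cardinality. Your hand-crafted argument for the ``$\leq\omega_1$ subproducts'' clause could alternatively be obtained by citing Corollary \ref{cor} with $\kappa=\omega_2$ (resp.\ $\kappa=\omega_1$ for part (b)), but your direct version is sound.
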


\section{Local sequential compactness} \labbel{examples} 

We first present another  corollary of Theorem \ref{bebis}.
 It deals with the general situation in which
a product belongs to $\mathcal T$ if and only if 
all subproducts by a small number of factors belong to $\mathcal T$.

\begin{corollary} \labbel{cor}
Suppose that $\mathcal T$ is a class 
of topological spaces closed under finite
products, $\mathcal T$ satisfies {\rm (W)} and  
 there is some
cardinal $\kappa > \omega $ such that 
a product belongs to $\mathcal T$ if and only if 
every subproduct by $<\kappa$ factors belongs to $\mathcal T$.

If $X=\prod _{i \in I} X_i $  is a nonempty product,
then the following conditions are equivalent.
 \begin{enumerate}
   \item[(I)] 
$X$ is $\mathcal T$-local.
\item[(II)]
Every subproduct by $<\kappa$ factors is  $\mathcal T$-local.
  \end{enumerate} 
 \end{corollary}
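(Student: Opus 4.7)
The plan is to derive the corollary by reducing it to Theorem \ref{bebis}, using the hypothesis that membership in $\mathcal T$ is determined by the $<\kappa$ subproducts.

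The direction (I) $\Rightarrow$ (II) is essentially immediate from Lemma \ref{ln}(a): since $X$ is $\mathcal T$-local and $\mathcal T$ satisfies (W2), every subproduct of $X$ is $\mathcal T$-local, in particular every subproduct by $<\kappa$ factors.

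For the converse (II) $\Rightarrow$ (I), I would first show that the set $B$ of indices $i$ for which $X_i \notin \mathcal T$ is finite. Indeed, if $B$ were infinite, pick a countable $C \subseteq B$. Since $|C|=\omega<\kappa$, the subproduct $Y=\prod_{i\in C} X_i$ is $\mathcal T$-local by (II), so by the equivalence (1) $\Leftrightarrow$ (2) in Theorem \ref{bebis} some cofinite subproduct $\prod_{i\in C\setminus F} X_i$ of $Y$ lies in $\mathcal T$, where $F\subset C$ is finite. As $C\setminus F$ is nonempty, (W3$'$) together with (W1) then forces each individual $X_i$ with $i\in C\setminus F$ to belong to $\mathcal T$, contradicting $C\subseteq B$. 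Hence $B$ is finite; set $F_0=B$, so every factor of $Z:=\prod_{i\in I\setminus F_0} X_i$ is a $\mathcal T$-space.

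Next I would verify that $Z\in\mathcal T$ using the key assumption on $\mathcal T$: it suffices to show that every subproduct $\prod_{i\in J} X_i$ with $J\subseteq I\setminus F_0$ and $|J|<\kappa$ lies in $\mathcal T$. By (II) such a subproduct is $\mathcal T$-local, so Theorem \ref{bebis} produces a finite $F'\subseteq J$ with $\prod_{i\in J\setminus F'} X_i \in \mathcal T$. Since each $X_i$ with $i\in F'$ is a $\mathcal T$-space (as $F'\subseteq I\setminus F_0$) and $\mathcal T$ is closed under finite products, a finite-product and homeomorphism argument (using (W1)) gives $\prod_{i\in J} X_i \in \mathcal T$, as desired. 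Therefore $Z\in\mathcal T$, so the cofinite subproduct obtained by removing $F_0$ is a $\mathcal T$-space. Combined with the fact that each factor $X_i$ is $\mathcal T$-local (take $|J|=1$ in (II)), the implication (2) $\Rightarrow$ (1) of Theorem \ref{bebis} yields that $X$ is $\mathcal T$-local.

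The main obstacle is the middle step: converting local information about $<\kappa$ subproducts into a cofinite $\mathcal T$-subproduct of the \emph{whole} product $X$. The two-stage approach, first bounding the bad factors to a finite set $F_0$ via a countable obstruction together with (W3$'$), and then applying the hypothesis on $<\kappa$ subproducts to the remaining product, is what makes this work; without the characterization of $\mathcal T$ in terms of $<\kappa$ subproducts one could not jump from local $\mathcal T$-ness of every $<\kappa$ subproduct to global $\mathcal T$-membership of a cofinite subproduct.
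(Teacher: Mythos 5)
Your proposal is correct and follows essentially the same route as the paper: reduce to condition (2) of Theorem \ref{bebis}, first bounding the set of non-$\mathcal T$ factors to a finite set via a countable subfamily together with (W3$'$) and (W1), then showing the cofinite subproduct over the good indices lies in $\mathcal T$ by verifying all its $<\kappa$ subproducts and invoking the hypothesis on $\mathcal T$. No gaps; the argument matches the paper's proof step for step.
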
 

\begin{proof}
(I) $\Rightarrow $  (II) follows from Lemma \ref{ln}(a).

We shall show that (II) implies Condition (2)
in Theorem \ref{bebis}. If (II) holds, then 
all factors are $\mathcal T$-local, again by  Lemma \ref{ln}(a).
Arguing as in the last part of the proof of Theorem \ref{bebis}
and since $\kappa$ is uncountable, we get that all but a finite number
of factors are $\mathcal T$-spaces. Let $J$ be the set 
of those factors which are in $\mathcal T$.
By assumption, any subproduct  $\prod _{i \in H} X_i $ of $X$ 
such that  $|H|<\kappa$ is $\mathcal T$-local, in particular, this happens if
$H \subseteq J$.
By Theorem \ref{bebis}(1) $\Rightarrow $  (2)
\emph{applied to  the product} $\prod _{i \in H} X_i $,
we get that $\prod _{i \in H'} X_i $ is a $\mathcal T$-space, 
for some $H'$ cofinite in $H$. If $H \subseteq J$,
then $X_i$ is a $\mathcal T$-space, for  $i \in H \setminus H'$,
hence,
since, by assumption, $\mathcal T$ is closed under finite products,
$\prod _{i \in H} X_i $ is a $\mathcal T$-space.
Since this happens for every $H \subseteq J$ such that 
$|H|<\kappa$, we get from the assumption on $\mathcal T$ that
$\prod _{i \in J} X_i $ belongs to $\mathcal T$.
Thus \ref{bebis}(2) holds.  
 \end{proof}  

\begin{corollary} \labbel{lsc}
 Let $X = \prod _{i \in I} X_i $  be a nonempty product.
Then the following conditions are equivalent.
 \begin{enumerate}
   \item 
$X$ is locally sequentially compact;
\item
each factor is locally sequentially compact and some cofinite subproduct
is sequentially compact;
\item
each factor is locally sequentially compact and 
there is a cofinite $J \subseteq I$ such that whenever 
$J' \subseteq J$ and $|J'| \leq \m s$, then    $\prod _{i \in J'} X_i $
is sequentially compact; 
\item
all subproducts by $\leq \m s$ factors are 
 locally sequentially compact;
\item 
($ \m h =  \m s$)
all factors are locally sequentially compact,
all but a finite number of factors are sequentially compact
and the set of factors with a nonconverging sequence
has cardinality $< \m s$.

\item 
($ \m h =  \m s$, for $T_1$ spaces)
all factors are locally sequentially compact,
all but a finite number of factors are sequentially compact,
and
the set of factors  with more than one point has cardinality $< \m s$.

\item 
($ \m h =  \m s$, for $T_3$  spaces)
the set of factors  with more than one point has cardinality $< \m s$,
all but a finite number of factors are sequentially compact, and the remaining factors are
locally sequentially compact.
  \end{enumerate} 
 \end{corollary}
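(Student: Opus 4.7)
The plan is to apply Theorem \ref{bebis} and Corollary \ref{cor} with $\mathcal T$ taken to be the class of sequentially compact spaces. First I would verify the prerequisites: $\mathcal T$ is closed under finite products by the standard diagonal-subsequence argument, and sequential compactness is preserved under continuous surjective images (extract a convergent preimage subsequence), so by the remarks following (W) in Section \ref{prel} the class satisfies (W). Sequential compactness is also closed-hereditary, which together with Lemma \ref{triv} supports the $T_3$-refinement in (7).

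The key cardinal-theoretic input I would invoke is the classical characterization (Scarborough--Stone; Vaughan's handbook chapter on countably compact spaces) that a product is sequentially compact if and only if every subproduct by at most $\m s$ coordinates is sequentially compact. With this, Corollary \ref{cor} applies with $\kappa = \m s^+$, yielding (1)$\Leftrightarrow$(4); Theorem \ref{bebis}(1)$\Leftrightarrow$(2) gives (1)$\Leftrightarrow$(2); and (2)$\Leftrightarrow$(3) is the quoted cardinal-theoretic characterization applied to the cofinite subproduct provided by (2).

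For (5)--(7), I would use the standard consequence of $\m h = \m s$ that every product of fewer than $\m s$ sequentially compact spaces is itself sequentially compact. For (2)$\Rightarrow$(5), projection shows that all but finitely many factors are sequentially compact, and if more than $\m s$ factors had a nonconverging sequence I would manufacture, from a splitting family of cardinality $\m s$, a sequence in the corresponding subproduct with no convergent subsequence, contradicting the sequential compactness of the cofinite subproduct. Conversely, for (5)$\Rightarrow$(2), on the factors in which every sequence converges the projected coordinates take care of themselves, while on the $(<\m s)$-sized remainder the subproduct is sequentially compact under $\m h = \m s$, so a common convergent subsequence may be extracted. The $T_1$ version (6) then follows once I observe that a $T_1$ space with two distinct points $x,y$ admits the nonconverging alternating sequence $x,y,x,y,\dots$, so that ``has a nonconverging sequence'' coincides with ``has more than one point''; and the $T_3$ version (7) is a rearrangement via Lemma \ref{triv}. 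The main obstacle I anticipate is the splitting-family construction toward (2)$\Rightarrow$(5) without separation axioms: a ``nonconverging sequence'' in a single factor need not directly lift to a sequence in a product of many such factors with no convergent subsequence, so care is required to prevent the diagonalized sequence from accidentally converging to some phantom point of the product.
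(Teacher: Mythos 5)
Your proposal is correct and follows essentially the same route as the paper: Theorem \ref{bebis} gives (1)$\Leftrightarrow$(2), Corollary \ref{cor} with $\kappa=\m s^+$ gives (1)$\Leftrightarrow$(4), the characterization ``a product is sequentially compact iff all subproducts by $\leq\m s$ factors are'' gives (2)$\Leftrightarrow$(3), the $\m h=\m s$ characterization gives (2)$\Leftrightarrow$(5), and the $T_1$/$T_3$ observations give (6) and (7). The only difference is that the paper simply cites \cite[Corollaries 6.4 and 6.6]{L} for the two cardinal-theoretic facts that you sketch proofs of (and your worry about the splitting-family diagonalization is thereby delegated to that reference rather than resolved in the proof itself).
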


 \begin{proof}
In \cite[Corollary 6.4]{L} we have proved that
a product is sequentially compact
if and only if all subproducts by $\leq \m s$ factors are
sequentially compact. 
See \cite{L} for the definition of $\m s$, $\m h$ and further references.

(1) $\Leftrightarrow $  (2)  is a particular case of 
the corresponding equivalence in Theorem \ref{bebis}.

(2) $\Leftrightarrow $  (3) follows from \cite[Corollary 6.4]{L}.

(1) $\Leftrightarrow $  (4)  follows from \cite[Corollary 6.4]{L}
and Corollary \ref{cor} with $\kappa= \m s^+$. 

In \cite[Corollary 6.6]{L} 
we have proved that if $ \m h =  \m s$, then 
a product is sequentially compact
if and only if 
all factors are sequentially compact and the set of factors
with a nonconverging
sequence has cardinality $<\m s$. This implies 
(2) $\Leftrightarrow $  (5).

(5) $\Leftrightarrow $  (6)
follows from the fact that a $T_1$ space in which every sequence converges is necessarily a one-point space.

(6) $\Leftrightarrow $  (7) follows from the fact that 
a $T_3$ sequentially compact space is locally sequentially compact.
 \end{proof}

\section{Some classes which are not closed under products} \labbel{clnpr}

In order to work with classes which
are not necessarily closed under products,
we shall consider the following property of some class $\mathcal T$.

  \begin{enumerate}
  \item [(S)] There are a class $\mathcal S$ of topological spaces 
and an infinite cardinal $\kappa$ such that  
a nonempty product 
$\prod _{i \in I} X_i $ belongs to $\mathcal T$ if and only if
$I$ can be written as a disjoint union $I=J \cup K$ in such a way that
$|J| < \kappa $, $\prod _{i \in J} X_i $ is a $\mathcal T$-space 
and $\prod _{i \in K} X_i $ is an $\mathcal S$-space.
We also require that $\mathcal S$ is closed
under homeomorphic images and  under taking cofinite subproducts.
  \end{enumerate} 

In the above condition we allow both $J= \emptyset $ and $K= \emptyset $.
This is consistent, since if $\mathcal T$ satisfies (W3$'$),
then any one-element space is a $\mathcal T$-space.
Moreover, ``$\mathcal S$ being closed under cofinite subproduct''
can be interpreted in a sense that 
 it implies that any one-element space belongs to $\mathcal S$.
In particular, (S) implies that every $\mathcal S$-space is a $\mathcal T$-space
and, more generally, that the product of a $\mathcal T$-space
with an $\mathcal S$-space is a $\mathcal T$-space.
Hence also the product of a $\mathcal T$-space with finitely many
$\mathcal S$-spaces is a $\mathcal T$-space. 
If not otherwise mentioned, we \emph{do not} require that $\mathcal S$
satisfy any special further property.

However, we should mention that if $\mathcal S$
satisfies the additional assumption
that a nonempty product belongs to $\mathcal S$
if and only if each factor belongs to $\mathcal S$  
 then 
a nonempty product belongs to $\mathcal T$ if and only if
every subproduct by $\leq \kappa$ factors belongs to $\mathcal T$.
Indeed, if the latter is the case, we cannot have $\kappa$-many factors
failing to be $\mathcal S$-spaces, hence the product
is a $\mathcal T$-space, by (S).

\begin{theorem} \labbel{st}
Suppose that  $\mathcal T$ is a class 
of topological spaces and $\mathcal T$ 
satisfies {\rm (W)} and {\rm (S)}, as given by $\mathcal S$ and $\kappa$.
If $X=\prod _{i \in I} X_i $  is a nonempty product, then the following conditions are equivalent. \begin{enumerate}
  \item 
$X$ is $\mathcal T$-local.
\item
Both the following conditions hold.
  \begin{enumerate}[(a)]
    \item    All subproducts of $X$ by 
$<\kappa$ factors 
are $\mathcal T$-local, and
\item
the index set  $I$ can be partitioned into two disjoint subsets
as $I=H \cup K$ in such a way that
$|H| < \kappa $
and $\prod _{i \in K} X_i $ is an $\mathcal S$-space.
\end{enumerate}
\item
The index set  $I$ can be partitioned into two disjoint subsets
as $I=H \cup K$ in such a way that
$|H| < \kappa $,
$\prod _{i \in K} X_i $ is an $\mathcal S$-space and
$\prod _{i \in H \cup F} X_i $ is  $\mathcal T$-local, for every 
finite $F \subseteq I$. 
  \end{enumerate} 

If  $\mathcal S$
satisfies the additional assumption
that a nonempty product belongs to $\mathcal S$
if and only if each factor belongs to $\mathcal S$,  
 then the preceding conditions (1)-(3) are equivalent to the following.

  \begin{enumerate}  
  \item [(4)]
All subproducts by $\leq \kappa$ factors are $\mathcal T$-local.
  \end{enumerate} 
 \end{theorem}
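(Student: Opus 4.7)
The plan is to establish the cycle $(1) \Rightarrow (2) \Rightarrow (3) \Rightarrow (1)$, and then to close the equivalence with $(4)$ via $(1) \Rightarrow (4) \Rightarrow (2)$ under the additional hypothesis on $\mathcal S$. For $(1) \Rightarrow (2)$, clause (a) is immediate from Lemma \ref{ln}(a), while for clause (b) any $\mathcal T$-neighborhood of a point of $X$ contains a basic open set, so Lemma \ref{ln}(b) produces a finite $F \subseteq I$ with $\prod_{i \in I \setminus F} X_i \in \mathcal T$; feeding this subproduct into (S) yields $I \setminus F = J \cup K$ with $|J| < \kappa$, $\prod_{i \in J} X_i \in \mathcal T$ and $\prod_{i \in K} X_i \in \mathcal S$, and $H := J \cup F$ works because $\kappa$ is infinite. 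The step $(2) \Rightarrow (3)$ is then bookkeeping: for any finite $F \subseteq I$ one has $|H \cup F| < \kappa$, so (2a) says $\prod_{i \in H \cup F} X_i$ is $\mathcal T$-local.

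The substantive direction is $(3) \Rightarrow (1)$. Given $x \in X$ and a neighborhood $U$, choose a basic open $\prod_{i \in I} V_i \subseteq U$ supported on a finite $F \subseteq I$ (so $V_i = X_i$ for $i \notin F$). Write $F = F_H \cup F_K$ with $F_H = F \cap H$, $F_K = F \cap K$, and identify $X$ with $A \times B$, where $A = \prod_{i \in H \cup F_K} X_i$ and $B = \prod_{i \in K \setminus F_K} X_i$. Then $B$ is a cofinite subproduct of $\prod_{i \in K} X_i \in \mathcal S$, so $B \in \mathcal S$. Because $H \cup F = H \cup F_K$, the factor $A$ is $\mathcal T$-local by (3); pick a $\mathcal T$-neighborhood $T_A$ of $x|_A$ inside the basic open $\prod_{i \in H \cup F_K} V_i$ in $A$. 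Then $T_A \times B$ is a neighborhood of $x$ contained in $\prod_{i \in I} V_i \subseteq U$, and applying (S) to the two-factor product $T_A \times B$ (placing $T_A$ in a $J$-part of size one and $B$ in the $K$-part) gives $T_A \times B \in \mathcal T$, as required.

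For the final equivalence, $(1) \Rightarrow (4)$ is again Lemma \ref{ln}(a). For $(4) \Rightarrow (2)$, clause (a) is immediate; for (b) I argue by contradiction: suppose $\{i \in I : X_i \notin \mathcal S\}$ has cardinality at least $\kappa$ and pick a subset $L$ of cardinality exactly $\kappa$. By (4) the subproduct $\prod_{i \in L} X_i$ is $\mathcal T$-local, and the already-proven clause (b) of $(1) \Rightarrow (2)$, applied to it, supplies $L = H_L \cup K_L$ with $|H_L| < \kappa$ and $\prod_{i \in K_L} X_i \in \mathcal S$. The additional assumption on $\mathcal S$ forces $X_i \in \mathcal S$ for each $i \in K_L$, but the choice of $L$ then forces $K_L = \emptyset$, contradicting $|L| = \kappa > |H_L|$. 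Hence $H := \{i : X_i \notin \mathcal S\}$ has $|H| < \kappa$, and with $K = I \setminus H$ the additional assumption gives $\prod_{i \in K} X_i \in \mathcal S$. The main obstacle throughout is the index bookkeeping in $(3) \Rightarrow (1)$, namely absorbing the finite perturbation $F$ into the $\mathcal T$-local factor $A$ without spoiling the $\mathcal S$-factor $B$, and then discharging $T_A \times B \in \mathcal T$ with a single clean use of (S) on a two-element index set.
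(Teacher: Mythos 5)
Your proposal is correct and follows essentially the same route as the paper: Lemma \ref{ln}(a)(b) plus the ``only if'' direction of (S) for $(1)\Rightarrow(2)$, the same absorption of the finite support into the $\mathcal T$-local block $H\cup F$ together with cofinite-subproduct closure of $\mathcal S$ and one application of (S) to a $\mathcal T$-space times an $\mathcal S$-space for $(3)\Rightarrow(1)$, and the same contradiction via $(1)\Rightarrow(2)$(b) applied to a size-$\kappa$ subproduct for $(4)\Rightarrow(2)$. No gaps.
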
 

\begin{proof} 
If (1) holds, then each subproduct  is a $\mathcal T$-space by Lemma \ref{ln}(a),
hence (2)(a) holds.
Moreover, by Lemma \ref{ln}(b), some cofinite subproduct
is a $\mathcal T$-space, hence (2)(b) follows from (S),
since if $F$ is finite and $|J| < \kappa $ then
$|J \cup F| < \kappa $, $\kappa$ being infinite.

(2) $\Rightarrow $  (3) is trivial.

Suppose that (3) holds,  $x = ( x_i) _{i \in I} \in X$ and
$U$ is a neighborhood of $x$. Thus $U$ 
contains a basic neighborhood 
of the form $\prod _{i \in I} U_i $,
where $U_i = X_i$, except for those $i$
in some finite set $F \subseteq I$.    
If $H$ and $K$ are given by (3), then,
by  the last requirement in  (S),
 $\prod _{i \in K \setminus F} X_i $ is an $\mathcal S$-space.
By (3), the subproduct 
$X'= \prod _{i \in H \cup F} X_i $ is $\mathcal T$-local. 
Consider the neighborhood   
$U'=\prod _{i \in H \cup F} U_i $
of $x'=( x_i) _{i \in H \cup F} $ in $X'$. 
Since $X'$ is $\mathcal T$-local, we get some $T \in \mathcal T$ 
such that $x' \in T \subseteq U'$.
By (S), $T \times  \prod _{i \in K \setminus F} X_i $ is a $\mathcal T$-space
and, modulo the natural homeomorphism, it is a neighborhood 
of $x$ contained in $U$.
Hence we have proved that $X$ is $\mathcal T$-local, that is (1) holds.

Thus (1)-(3) are equivalent.

(1) $\Rightarrow $  (4) follows again by Lemma \ref{ln}(a).

 We shall conclude the proof by showing that (4) implies (2),
under the additional assumption.
The implication (4) $\Rightarrow $  (2)(a) is trivial.
In order to show (2)(b), in view of the additional hypothesis,
it is enough to show that the set of all factors which are not $\mathcal S$-spaces
has cardinality $<\kappa$. Suppose by contradiction that 
$J \subseteq I$, $|J| = \kappa $ and $X_i \not \in \mathcal S $, for every $i \in J$.   
By (4), the subproduct $\prod _{i \in J} X_i $ is $\mathcal T$-local,
but then we get a contradiction by applying (1) $\Rightarrow $  (2)(b)
to that subproduct.
\end{proof}  

If $\prod _{i \in I} X_i $ is a product of topological spaces and $J \subseteq I$,
we shall say, again with some abuse of terminology, that a product
$\prod _{i \in H} X_i $  is a \emph{finite superproduct}
of $( X_i) _{i \in J} $ if $H=J \cup F$, for some finite $F \subseteq I$.   

\begin{corollary} \labbel{fin} 
 Suppose that $ n <\omega$ and $X$ is a nonempty product.
Then the following conditions are equivalent. 
\begin{enumerate}
   \item 
$X$ is locally finally $ \omega_n$-compact. 
\item
All but $ <\omega_n$ factors are compact,
and any finite superproduct of the set of noncompact factors
is locally finally $ \omega_n$-compact.
\item
Every subproduct by $\leq \omega _n$ factors 
is locally  finally $ \omega_n$-compact. 
\item 
(for $T_2$ spaces)
All but $ <\omega_n$ factors are compact,
and the product of the  noncompact factors
is locally finally $ \omega_n$-compact.
 \end{enumerate}

If $\lambda$ is a strong limit cardinal with $\cf\lambda \geq \omega _n$,
then all the above conditions hold when 
final $ \omega_n$-compactness is everywhere replaced
by $[ \omega _n, \lambda ]$-compactness
and compactness is replaced by initial $\lambda$-compactness
(but the separation assumption in (4) should be $T_3$).   
\end{corollary}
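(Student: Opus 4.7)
The plan is to apply Theorem \ref{st} with $\mathcal T$ equal to the class of finally $\omega_n$-compact spaces, $\mathcal S$ the class of compact spaces, and $\kappa=\omega_n$. Property (W) holds because final $\omega_n$-compactness is preserved by continuous surjective images, and $\mathcal S$ is trivially closed under homeomorphic images and under passing to cofinite (indeed arbitrary) subproducts. The instance of (S) we need is the classical product theorem: a nonempty product $\prod_{i\in I}X_i$ is finally $\omega_n$-compact iff $I$ splits as a disjoint union $J\cup K$ with $|J|<\omega_n$, $\prod_{i\in J}X_i$ finally $\omega_n$-compact, and $\prod_{i\in K}X_i$ compact. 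Tychonoff's theorem together with the fact that continuous surjective images of compact spaces are compact also give the ``additional assumption'' on $\mathcal S$ in Theorem \ref{st}, namely that a nonempty product is compact iff every factor is.

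With these identifications, the equivalences (1) $\Leftrightarrow$ (2) $\Leftrightarrow$ (3) in the corollary follow from the equivalences (1) $\Leftrightarrow$ (3) $\Leftrightarrow$ (4) of Theorem \ref{st}. To match (2), take $H$ to be the set of noncompact factors (forced to have cardinality $<\omega_n$) and $K=I\setminus H$, so that $\prod_{i\in K}X_i$ is compact by Tychonoff; the phrase ``any finite superproduct of the set of noncompact factors is locally finally $\omega_n$-compact'' is exactly the requirement that $\prod_{i\in H\cup F}X_i$ is $\mathcal T$-local for every finite $F\subseteq I$, which is the last clause of Theorem \ref{st}(3).

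For the $T_2$ refinement to (4), the direction (2) $\Rightarrow$ (4) is immediate by taking $F=\emptyset$. Conversely, assume (4). Fix a finite $F\subseteq I$ and split $F=F_1\cup F_2$ with $F_1=F\cap H$ and $F_2\subseteq K$, so that $\prod_{i\in H\cup F}X_i$ is naturally homeomorphic to $\prod_{i\in H}X_i\times\prod_{i\in F_2}X_i$. The second factor is a finite product of compact $T_2$ spaces, hence compact Hausdorff, and therefore locally compact. Given a point $(x,y)$ and a basic neighborhood $U\times V$, use (4) to choose a finally $\omega_n$-compact neighborhood $N\subseteq U$ of $x$, and local compactness to choose an open $W\ni y$ with $\overline W\subseteq V$ compact; then $N\times\overline W$ is a neighborhood of $(x,y)$ contained in $U\times V$ which, by (S) applied to a single $\mathcal T$-space times a single $\mathcal S$-space, is finally $\omega_n$-compact. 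This yields (2).

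The final sentence is handled by exactly the same scheme with $\mathcal T$ the class of $[\omega_n,\lambda]$-compact spaces, $\mathcal S$ the class of initially $\lambda$-compact spaces, and $\kappa=\omega_n$; the instance of (S) is now the analogous product theorem, which holds precisely when $\lambda$ is a strong limit with $\cf\lambda\geq\omega_n$. The separation hypothesis in (4) must be strengthened to $T_3$ because the argument just given used that a compact $T_2$ space is locally compact; the correct analog is that an initially $\lambda$-compact $T_3$ space is locally initially $\lambda$-compact, which follows from Lemma \ref{triv}(f) since initial $\lambda$-compactness is hereditary with respect to regular closed subsets. The main obstacle in turning this sketch into a complete proof is the accurate statement and citation of the two product theorems that supply property (S) in each case.
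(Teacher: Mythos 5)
Your proposal is correct and follows essentially the same route as the paper, whose proof is literally ``Immediate from Theorem \ref{st} and \cite[Theorems 4.1 and 4.3]{L}'': you instantiate Theorem \ref{st} with $\mathcal T$ the finally $\omega_n$-compact (resp.\ $[\omega_n,\lambda]$-compact) spaces, $\mathcal S$ the compact (resp.\ initially $\lambda$-compact) spaces and $\kappa=\omega_n$, with property (S) supplied by the product theorems of \cite{L}. Your explicit treatment of condition (4) via local compactness of compact $T_2$ spaces (and Lemma \ref{triv}(f) in the $T_3$ case) just fills in a detail the paper leaves implicit.
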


 \begin{proof}
Immediate from Theorem \ref{st}
and \cite[Theorems 4.1 and 4.3]{L}.  
 \end{proof} 

Notice that $ \omega_1$-final compactness is the same as Lindel\"ofness.
Since the product of countably many copies of $ \omega$
with the discrete topology is Lindel\"of and locally 
Lindel\"of, but the product of uncountably many copies
of $ \omega$ is not Lindel\"of (hence not locally Lindel\"of, either),
we get that ``$\leq \omega _1$'' in Condition (3) above
cannot be improved to ``$< \omega _1$''. 

However, we do not know whether
Corollary \ref{fin} can be improved, say, in the case of Lindel\"ofness,
to the following. A product is locally Lindel\"of if and only if
all but countably many factors are compact,
 all but finitely many factors are Lindel\"of 
and every finite subproduct is locally Lindel\"of.
We expect the above statement to be false, in general.

Again applying Theorem \ref{st},
in this case together with \cite[Corollary 5.3 and Propositions 5.1 and 5.2]{L}, we get the following.

\begin{corollary} \labbel{meng} 
If $X$ is a nonempty product,
then the following conditions are equivalent. 
\begin{enumerate}
   \item 
$X$ is locally Menger. 
\item
All but countably many factors are compact,
and any finite superproduct of the set of non Menger  factors
is locally Menger.
\item
Every subproduct by $\leq \omega _1$ factors 
is locally  Menger. 
\item 
(for $T_2$ spaces)
All but countably many factors are Menger,
and the product of the  non Menger factors
is locally Menger.
 \end{enumerate}
All the above conditions hold when 
Menger  is everywhere replaced
by either the Rothberger property,
or  the Rothberger property for countable covers,
and compactness by supercompactness.
\end{corollary}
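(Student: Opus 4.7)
The plan is to apply Theorem \ref{st} with $\mathcal{T}$ taken to be the class of Menger spaces, $\mathcal{S}$ the class of compact spaces, and $\kappa=\omega_1$. First I would check that $\mathcal{T}$ satisfies (W): the Menger property is preserved under continuous surjective images (a standard, essentially trivial verification using that the image of an open cover lifts back to an open cover), so by the remark immediately following the statement of (W) in Section \ref{prel}, (W1)--(W3) all hold. Next I would verify (S) by invoking \cite[Corollary 5.3 and Propositions 5.1, 5.2]{L}, whose combined content is precisely that a nonempty product $\prod_{i \in I} X_i$ is Menger if and only if $I$ admits a partition $I=J \cup K$ with $|J| \leq \omega$, $\prod_{i \in J} X_i$ Menger, and $\prod_{i \in K} X_i$ compact. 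The class $\mathcal{S}$ of compact spaces is obviously closed under homeomorphic images and, by Tychonoff, under cofinite subproducts, so (S) holds with $\kappa=\omega_1$.

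With (W) and (S) in hand, Theorem \ref{st} immediately yields the equivalence of conditions (1), (2), and (3) of Corollary \ref{meng}. Indeed, Theorem \ref{st}(1) is our (1); Theorem \ref{st}(3), with $H$ identified as (any countable set containing) the non-Menger factors and $K$ as the remaining (compact) factors, is our (2); and since $\mathcal{S}$ satisfies the further hypothesis that a nonempty product is compact iff each factor is compact (Tychonoff), the enhanced condition (4) of Theorem \ref{st}, which reads ``every subproduct by $\leq\kappa=\omega_1$ factors is locally Menger'', is exactly our (3).

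To obtain condition (4) of Corollary \ref{meng} under the $T_2$ hypothesis, I would argue as follows. Assuming (2), the set of non-Menger factors is contained in the countable set of non-compact factors, hence the product of the non-Menger factors is itself a countable subproduct of $X$, and by (3) it is locally Menger; also, since compactness implies Mengerness, all but countably many factors are in particular Menger. Conversely, assuming (4) in the $T_2$ setting, the product of the non-Menger factors is locally Menger, so by Lemma \ref{ln}(b) applied to (W3) some cofinite subproduct of these factors belongs to $\mathcal{T}$; invoking (S) once more for this subproduct forces all but finitely many of the non-Menger factors to be compact (this is where Hausdorffness is needed, so that the relevant neighborhoods can be shown to sit inside compact-like subspaces, as in the proof of the analogous statement in Corollary \ref{fin}(4)). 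Combined with the assumption that all but countably many factors are Menger, this gives all but countably many factors compact, and the finite superproduct condition in (2) then follows from the local Mengerness of the product of non-Menger factors.

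Finally, for the Rothberger and ``Rothberger for countable covers'' variants (with compactness replaced by supercompactness), the same strategy applies: both selective properties are preserved under continuous surjective images, so (W) is immediate; the analogous product decomposition theorems from \cite{L} give (S) with $\mathcal{S}$ the class of supercompact spaces and $\kappa=\omega_1$; and the Tychonoff-type theorem for supercompactness supplies the additional hypothesis needed for Theorem \ref{st}(4). The only delicate step in the whole argument is the careful verification of (S) and of the $T_2$-part of (4), which rests on the nontrivial product characterizations quoted from \cite{L} rather than on anything proved in the present note.
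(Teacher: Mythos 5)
Your overall strategy --- Theorem \ref{st} with $\mathcal T$ the class of Menger spaces, $\mathcal S$ the class of compact spaces and $\kappa=\omega_1$, with (W) coming from preservation of the Menger property under continuous surjections and (S) from the cited results of \cite{L} --- is exactly what the paper does (its proof consists of a single sentence to this effect), and your verifications of (W), of (S), and of the equivalence of (1) with (3) via Theorem \ref{st}(4) are fine.

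The gap is in your treatment of conditions (2) and (4), and it turns on the difference between \emph{non-Menger} and \emph{non-compact} factors. In Theorem \ref{st}(3) the product over $K$ must be compact, so $H$ must contain \emph{every} non-compact factor, and the theorem then demands local Mengerness of all finite superproducts of $H$; your reading of (2) only supplies this for superproducts of the possibly much smaller set of non-Menger factors. Likewise, in your argument for $(4)\Rightarrow(2)$ the sentence ``this gives all but countably many factors compact'' is a non sequitur: that all but countably many factors are Menger says nothing about how many are compact. The point is not cosmetic. Take $X=\prod_{n\in\omega}\omega$ with $\omega$ discrete: every factor is Menger and locally Menger, the index set is countable, so (2) and (4) as literally stated hold (the set of non-Menger factors is empty and ``all but countably many'' is vacuous); but $X$ is the Baire space, every neighborhood of a point contains a closed copy of $\omega^\omega$, the Menger property is closed-hereditary, and $\omega^\omega$ is not Menger, so $X$ is not locally Menger. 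Hence $(2)\Rightarrow(1)$ and $(4)\Rightarrow(1)$ cannot be established as you (and the corollary) state them; the conditions must be read with ``noncompact'' in place of ``non Menger'', as in the parallel Corollary \ref{fin}, and with that reading your identification of (2) with Theorem \ref{st}(3) and your derivation of (4) do go through.
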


\section{Further remarks} \labbel{fr} 

All the above arguments,
with the obvious modifications, can be applied
also to the ``basic'' and the  ``local$_1$'' case.

\begin{proposition} \labbel{same}
 Lemma \ref{ln}, Theorems \ref{bebis} and \ref{st} 
and Corollary \ref{cor}  hold
with ``local'' replaced everywhere by
 either ``basic'' or ``local$_1$'', except that in the ``basic'' case 
Condition {\rm (W2)} should be replaced everywhere by the following
Condition {\rm (W2$_O$)}, and {\rm (W)} should 
be modified accordingly, that is, we should consider
{\rm (W$_O$)}, the conjunction of {\rm (W1)}, {\rm (W2$_O$)} and {\rm (W3)}.
  \begin{enumerate}
\item[{\rm (W2$_O$)}] Whenever $A$, $B$ are  topological spaces, $a \in A$,
$b \in B$ and   $ T \subseteq A \times B$ is an \emph{open}  $\mathcal T$-neighborhood 
of $(a, b)$,
  then there is $S \subseteq A$  which is an \emph{open} $\mathcal T$-neighborhood  of  $a$. 
  \end{enumerate} 
 \end{proposition}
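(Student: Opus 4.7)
The plan is to observe that the entire development is modular: (W2) (respectively, (W2$_O$)) is used only in Lemma \ref{ln}(a), while Lemma \ref{ln}(b)--(d), Theorems \ref{bebis}, \ref{st} and Corollary \ref{cor} rely exclusively on (W3), (W3$'$), closure under finite products, closure under homeomorphisms, and purely combinatorial/topological manipulations that are insensitive to whether neighborhoods are open or not. I would therefore revisit each proof once, keeping track of which form of (W2) is invoked, and read ``neighborhood'' throughout either as ``open neighborhood'' (basic case) or as ``at least one neighborhood'' (local$_1$ case).

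For the local$_1$ variant essentially nothing changes. In the proof of Lemma \ref{ln}(a), basing the argument at a single pair $(a,b)$, $\mathcal T$-local$_1$ness of $Y \times B$ already delivers a $\mathcal T$-neighborhood of $(a,b)$, and (W2) produces the required $\mathcal T$-neighborhood of $a$ in $A$; parts (c) and (d) likewise deliver only one $\mathcal T$-neighborhood per point, which is all that is needed. For the basic variant I would read ``neighborhood'' as ``open neighborhood'' everywhere in the proof of Lemma \ref{ln}(a): starting from an open neighborhood $A$ of $a$ in $Y$, the product $A \times B$ is open in $Y \times B$, $\mathcal T$-basicness of $Y \times B$ yields an open $\mathcal T$-neighborhood $T \subseteq A \times B$ of $(a,b)$, and (W2$_O$) produces the open $\mathcal T$-neighborhood $S \subseteq A$. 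In Lemma \ref{ln}(c), the $V_i$ are chosen as open $\mathcal T$-neighborhoods, so $\prod_{i\in I} V_i$ is automatically open, and the same (W3$'$) plus closure under finite products argument shows it lies in $\mathcal T$.

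Once Lemma \ref{ln} is established in each variant, Theorem \ref{bebis} transfers verbatim: every implication in its proof is a composition of clauses of Lemma \ref{ln}, the reduction to finite subproducts via (W3) and (W3$'$), and the hypothesis that every $\mathcal T$-space is $\mathcal T$-local (resp.\ basic, resp.\ local$_1$) wherever the starred conditions are involved. Similarly Theorem \ref{st} reduces to its corresponding Lemma \ref{ln}(a)(b) step together with condition (S), which again concerns only $\mathcal T$-membership of subproducts; and Corollary \ref{cor} is pure bookkeeping on top of these.

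The only real point of attention -- and the reason the substitution (W2)$\leadsto$(W2$_O$) is needed only in the basic case -- is to ensure that when $T$ is supplied as an open $\mathcal T$-neighborhood of $(a,b)$ one really obtains an open, not merely a set-theoretic, $\mathcal T$-neighborhood of $a$ in the first factor. I do not anticipate any substantive obstacle; the main task is notational consistency across the three variants, and the observation that (W3), (W3$'$), closure under finite products and closure under homeomorphisms are all indifferent to the local/basic/local$_1$ distinction.
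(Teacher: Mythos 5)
Your proposal is correct and takes essentially the same approach as the paper, which in fact offers no written proof of Proposition \ref{same} beyond the preceding remark that all the earlier arguments apply with the obvious modifications. Your verification --- that (W2) enters only in Lemma \ref{ln}(a), where it must become (W2$_O$) so that the neighborhood produced in the factor is open (after harmlessly shrinking the given neighborhood $A$ to an open one), while (W1), (W3), (W3$'$), closure under finite products and condition (S) are indifferent to the local/basic/local$_1$ distinction --- is precisely the check the paper leaves to the reader, carried out in somewhat more detail than the paper itself.
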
  

Let us say that $\mathcal T$ satisfies
(C)
if $\mathcal T$ is closed under 
images of continuous surjection.
As we mentioned, (C) implies (W).
It is easy to see that
if $\mathcal T$ satisfies (C), then
the image of a local $\mathcal T$-space
under a continuous open map
is still a local $\mathcal T$-space.
In order to get the above conclusion, 
it is not enough to assume (W) in place of (C).
E.~g., the image of a Hausdorff 
space (hence locally Hausdorff)
is not necessarily locally Hausdorff.
The example is classical:  take two disjoint copies of the unit real interval
and pairwise identify the copies of $0$, as well as the 
copies of $1/n$, for each $n>0$. 
  
However, there are conditions weaker than (C) 
which still imply that images of local $\mathcal T$-spaces
under open continuous maps are $\mathcal T$-local.

  \begin{enumerate}
   \item[{\rm (C$^{-}$)}]
Whenever $X$ is a topological space, $T \subseteq X$ is a subspace,
$T \in \mathcal T$  and $\pi: X \to Y$ is a continuous open surjection,
then $\pi(T) \in \mathcal T$.  
   \item[{\rm (C$^{=}$)}]
Whenever $X$ is a topological space, $T \subseteq X$ 
contains some open set of $X$ 
and $\pi: X \to Y$ is a continuous open surjection,
then $\pi(T) \in \mathcal T$.  
   \item[{\rm (C$^{\equiv}$)}]
Whenever $X$ is a topological space, $x \in T \subseteq X$, 
$T  $ is a $ T$-neighborhood of $x$  in $X$ 
and $\pi: X \to Y$ is a continuous open surjection,
then $\pi(x)$ has some $\mathcal T$-neighborhood.  
   \end{enumerate}

Notice that (C) $\Rightarrow $  (C$^{-}$)
$\Rightarrow $    (C$^{=}$) $\Rightarrow $ 
(C$^{\equiv}$) $\Rightarrow $  (W2) and 
 (C$^{=}$) $\Rightarrow $ (W).

Consider also the following property (C$^{\equiv}_O$),
 which implies  (W2$_O$).

  \begin{enumerate}
   \item[{\rm (C$^{\equiv}_O$)}]
Whenever $X$ is a topological space, $x \in T \subseteq X$, 
$T \in \mathcal T$ is an open  neighborhood of $x$  in $X$ 
and $\pi: X \to Y$ is a continuous open surjection,
then $\pi(x)$ has some open neighborhood in $\mathcal T$.  
   \end{enumerate}

\begin{lemma} \labbel{lem}
If $\mathcal T$ is a class of topological spaces
satisfying {\rm (C$^{\equiv}$)},
then the  image of any local (resp., local$_1$) $\mathcal T$-space under a continuous
open surjection is a local (resp., local$_1$) $\mathcal T$-space. 

If $\mathcal T$ is a class of topological spaces
satisfying {\rm (C$^{\equiv}_O$)},
then the  image of any basic $\mathcal T$-space under a continuous
open surjection is a basic $\mathcal T$-space. 
 \end{lemma}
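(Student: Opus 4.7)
The plan is to handle the local$_1$ case by a direct appeal to (C$^{\equiv}$) and the local and basic cases by a small restriction trick. For local$_1$: given $y \in Y$, I would pick any $x \in \pi^{-1}(y)$ (which exists by surjectivity) and any $\mathcal T$-neighborhood $T$ of $x$ (which exists since $X$ is local$_1$ $\mathcal T$); then (C$^{\equiv}$) applied to $X$, $T$, $x$, $\pi$ directly yields a $\mathcal T$-neighborhood of $y=\pi(x)$, which is all that is required.

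For the local case, the difficulty is that (C$^{\equiv}$) provides \emph{some} $\mathcal T$-neighborhood of $\pi(x)$ in $Y$ with no control on its size, whereas the definition of $\mathcal T$-localness requires one contained in a prescribed neighborhood $U$ of $y$. My plan is to sidestep this by restricting the map before invoking (C$^{\equiv}$). Given $y \in Y$ and a neighborhood $U$ of $y$, I may assume $U$ is open (replacing it by its interior). Choose $x \in \pi^{-1}(y)$; since $X$ is $\mathcal T$-local and $\pi^{-1}(U)$ is an open neighborhood of $x$, pick a $\mathcal T$-neighborhood $T$ of $x$ with $T \subseteq \pi^{-1}(U)$. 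The restriction $\pi' \colon \pi^{-1}(U) \to U$ of $\pi$ is again a continuous open surjection: surjectivity is immediate from the surjectivity of $\pi$, continuity is inherited, and openness follows from the fact that $\pi^{-1}(U)$ is open in $X$, so every open subset of $\pi^{-1}(U)$ is open in $X$ and is sent by $\pi$ to an open subset of $Y$ which automatically lies in $U$. Now $T$ is still a $\mathcal T$-neighborhood of $x$ in $\pi^{-1}(U)$, and (C$^{\equiv}$) applied to $\pi'$ yields a $\mathcal T$-neighborhood $S$ of $y$ in $U$. Because $U$ is open in $Y$, this $S$ is a $\mathcal T$-neighborhood of $y$ in $Y$ as well, and $S \subseteq U$, as needed.

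The basic case runs along identical lines, replacing each ``$\mathcal T$-neighborhood'' by ``open $\mathcal T$-neighborhood'' and invoking (C$^{\equiv}_O$) instead of (C$^{\equiv}$): since $X$ is basic $\mathcal T$, the set $T$ above can be chosen open in $X$, hence open in $\pi^{-1}(U)$, so (C$^{\equiv}_O$) produces an open $\mathcal T$-neighborhood of $y$ inside $U$, which is open in $Y$ too because $U$ is. The only genuine obstacle in all three cases is the aforementioned lack of size control in the hypotheses, and it is resolved cleanly once one observes that restricting $\pi$ to $\pi^{-1}(U)$, for $U$ open, preserves the continuous-open-surjection property.
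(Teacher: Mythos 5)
Your proof is correct. The paper states Lemma \ref{lem} without giving a proof, so there is nothing to compare against directly, but your argument is the natural one: the local$_1$ case is an immediate application of (C$^{\equiv}$), and the key point for the local and basic cases --- that (C$^{\equiv}$) gives no control on the size of the resulting neighborhood, which you recover by restricting to the continuous open surjection $\pi^{-1}(U) \to U$ for $U$ open --- is exactly the observation needed, and all the small verifications (that the restriction is still open, that $T$ remains a $\mathcal T$-neighborhood of $x$ in the open subspace $\pi^{-1}(U)$, and that a $\mathcal T$-neighborhood of $y$ in the open set $U$ is one in $Y$) are handled correctly.
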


\begin{remark} \labbel{ambient}
We have usually worked in the class of arbitrary topological spaces,
however, essentially all the above definitions and results can be considered as
restricted to some special class, e.~g., $T_1$, Hausdorff or Tychonoff 
spaces. Seemingly, we can allow also spaces with a richer structure, e.~g., 
topological groups. We only need an ambient in which
it makes sense to talk of (arbitrary) products, and, if there is more structure
other than topology,
the topological Tychonoff product agrees with the product of the structure.
If we work in a specific ambient, say, of Hausdorff spaces, 
everything should be interpreted relative to that ambient;
for example, in that context, a class $\mathcal T$ 
is ``closed under images of 
surjective continuous functions'' 
if whenever $f:X \to Y$ 
is continuous and surjective, $X \in \mathcal T$
\emph{and $X$ and $Y$ are Hausdorff}, then
$Y \in \mathcal T$.  
For example, the class of Hausdorff compact spaces is 
closed under images of 
surjective continuous functions in the Hausdorff context, but \emph{not}
in the context of arbitrary topological spaces. 
 \end{remark}

\begin{remark} \labbel{improv}
It seems that, whenever we use the assumption
that $\mathcal T$ is closed under finite products,
we can do with the following weaker condition.
  \begin{enumerate}
    \item [(FP)] 
Whenever $A, B \in \mathcal T$
and $x \in A \times B$, then $x$ has a neighborhood 
in $\mathcal T$.  
 \end{enumerate}
This remark applies, e.~g.,  to 
Lemma \ref{ln}(c)(d), 
Theorem \ref{bebis}(1)-(3)
and Corollary \ref{cor}.
Notice that (FP) can be reformulated
as ``the product of two $\mathcal T$-spaces
is $\mathcal T$-local$_1$''.
Notice also that if $\mathcal T$ is such that 
every $\mathcal T$-space is $\mathcal T$-local,
then (FP) is equivalent to the assertion that the product
of two  $\mathcal T$-local spaces is $\mathcal T$-local.  
We know no application of the above remarks,
hence we have kept the statements in the simpler (but less general)
form.
 \end{remark}

\begin{remark} \labbel{ac}
Concerning our use of the Axiom of Choice (AC),
as the results are formulated,
it seems unnecessary in the statements and proofs of 
Lemmas \ref{triv}, \ref{ln}, \ref{lem}, 
Theorems \ref{bebis} and \ref{st} 
 (except for \ref{bebis}(4) and \ref{st}(4)) 
and in the corresponding  parts of Proposition \ref{same}.
The use of AC  seems to be essential in most examples and applications. 
 \end{remark}   

\begin{disclaimer*}
This is a preliminary report
and might contain some inaccuracies.
In particular, the author 
acknowledges that the following list of references might be
incomplete or partially inaccurate.
Henceforth the author  strongly discourages the use 
of indicators extracted from the list in decisions about individuals, attributions of funds, selections or evaluations of research projects, etc.
A more detailed disclaimer can be found at the author's web page.
\end{disclaimer*}


\begin{thebibliography}{BE}

\bibitem[B]{Br} 
S. Brandhorst, \emph{Tychonoff-Like Theorems and Hypercompact Topological Spaces}, Bachelor's thesis, Leibniz Universit\"at, Hannover, 2013.

\bibitem[BE]{BE} S. Brandhorst and M. Ern\'e,
\emph{Tychonoff-like product theorems for local topological properties},
Topology Proc. \textbf{45} (2015), 121--138.

\bibitem[H1]{H} R.-E. Hoffmann, \emph{Topological spaces admitting a ``dual''}, in 
\emph{Categorical topology},  Edited by H. Herrlich and G.
Preu\ss, Proc. int. Conf., Berlin 1978, Lect. Notes Math. 719 (1979), 157--166.

\bibitem[H2]{Henc} T. Hoshina,  \emph{Locally (P)-spaces}, in   
\emph{Encyclopedia of general topology},
edited by K. P. Hart, J. Nagata and J. E. Vaughan, Elsevier
Science Publishers, B.V., Amsterdam, (2004), 65--66.

\bibitem[L]{L} P. Lipparini, \emph{Products of sequentially compact spaces and compactness with respect to a set of filters}, arXiv:1303.0815v4 (2014).
 

\bibitem[P]{P}  G. Preu\ss, \emph{Allgemeine Topologie}, 
Hochschultext. Berlin-Heidelberg-New York: Springer-Verlag (1972).



\end{thebibliography}
\end{document}